\newcommand{\norma}[1]{{\left\vert\kern-0.25ex\left\vert\kern-0.25ex\left\vert #1
    \right\vert\kern-0.25ex\right\vert\kern-0.25ex\right\vert}}
\newcommand{\by}{\mathbf{y}}
\newcommand{\bb}{\mathbf{b}}
\newcommand{\bo}{\mathbf{0}}
\newcommand{\bx}{\mathbf{x}}
\newcommand{\bF}{\mathbf{F}}
\newcommand{\bu}{\mathbf{u}}
\newcommand{\bv}{\mathbf{v}}
\newcommand{\bA}{\mathbf{A}}
\newcommand{\bB}{\mathbf{B}}
\newcommand{\bD}{\mathbf{D}}
\newcommand{\bG}{\mathbf{G}}
\newcommand{\bM}{\mathbf{M}}
\newcommand{\bN}{\mathbf{N}}
\newcommand{\bU}{\mathbf{U}}
\newcommand{\bV}{\mathbf{V}}
\newcommand{\bI}{\mathbf{I}}
\newcommand{\tbn}[1]{{\left\vert\kern-0.25ex\left\vert\kern-0.25ex\left\vert #1 \right\vert\kern-0.25ex\right\vert\kern-0.25ex\right\vert}}
\newtheorem{remark}{Remark}[section]
\newtheorem{lemma}{Lemma}[section]
\newtheorem{theorem}{Theorem}[section]
\newtheorem{definition}{Definition}[section]
\title{On iterative methods based on Sherman-Morrison-Woodbury splitting}
\author{Dimitrios Mitsotakis}
\address{\textbf{D.~Mitsotakis:} Victoria University of Wellington, School of Mathematics and Statistics, PO Box 600, Wellington 6140, New Zealand}
\email{dimitrios.mitsotakis@vuw.ac.nz}
\date{\today}
\begin{document}

\keywords{Iterative methods, Sherman-Morrison-Woodbury formula, convergence, nearly circulant matrix}
\subjclass{65F10,65F45,65N22}
 
\begin{abstract}

We consider a new splitting based on the Sherman-Morrison-Woodbury formula, which is particularly effective with iterative methods for the numerical solution of large linear systems. These systems involve matrices that are perturbations of circulant or block circulant matrices, which commonly arise in the discretization of differential equations using finite element or finite difference methods. We prove the convergence of the new iteration without making any assumptions regarding the symmetry or diagonal-dominance of the matrix. 

To illustrate the efficacy of the new iteration we present various applications. These include extensions of the new iteration to block matrices that arise in certain saddle point problems as well as two-dimensional finite difference discretizations. The new method exhibits fast convergence in all of the test cases we used. It has minimal storage requirements, straightforward implementation and compatibility with nearly circulant matrices via the Fast Fourier Transform. For this reasons it can be a valuable tool for the solution of various finite element and finite difference discretizations of differential equations.
\end{abstract}

\maketitle

\section{Introduction}

Classical iterative methods for approximating solutions of linear systems are based on the splitting of matrices $\bA=\bM-\bN$ and converge whenever the iteration matrix $\bG=\bM^{-1}\bN$ has spectral radius $\rho(\bG)<1$. The spectral radius $\rho(\bG)$ also characterizes the speed of convergence of the iterative method, and the smaller it is, the faster the corresponding iterative method converges. Such splittings rely heavily on the invertibility of $\bM$. On the other hand, most commonly used methods for the numerical solution of differential equations such as finite difference and finite element methods lead to discretizations with large and sparse linear systems. When structured rectangular grids are used the coefficient matrix $\bA$ of such linear systems is usually a perturbation of a circulant matrix $\bM$, i.e. we can write $\bA=\bM-\bN$, with $\bN$ a sparse matrix. Such systems are usually solved with the help of direct methods for band matrices such as the banded $LU$ or Cholesky decomposition and in the case of circulant matrices with the Sherman-Morrison-Woodbury algorithm \cite{KMN1989} or with direct inversion of its diagonalization via the Fast Fourier Transform(FFT) \cite{Chen1987}. In addition to the previously mentioned classical discretizations, other $H^1$-Galerkin/mixed Finite element discretizations lead to saddle point problems with block nearly circulant matrices \cite{GH2006,MRKS2021}. General saddle point problems can be very challenging and various methods have been proposed for their numerical approximation of their solutions \cite{BGL2005,Brenner2009,Hadj2016,HM2021}. Among these methods we just mention the Uzawa-type iterative methods and other Krylov methods such as the conjugate gradient and GMRES method \cite{Brenner2009,SM1986}.

In this paper we study iterative methods based on a new splitting $\bA=\bM-\bN$. The analysis is not limited to perturbations of circulant or symmetric matrices: We consider general perturbations of a matrix $\bM$, which we call nearly $\bM$ matrices. The definition of a nearly $\bM$ matrix is based on the Sherman-Morrison-Woodbury formula, \cite{Golub12}:
\begin{definition}\label{def:nearcirc}
A matrix $\bA\in\mathbb{C}^{n,n}$ is near a matrix $\bM$ (we say nearly $\bM$ matrix) if there is an invertible matrix $\bM$ and a matrix $\bN=\bU\bV^T$ for appropriate $\bU,\bV$ such that
$\bA=\bM-\bN$, and with spectral radius $\rho(\bV^T\bM^{-1}\bU)<1$.
\end{definition}
Note that if the spectral radius $\rho(\bV^T\bM^{-1}\bU)<1$, then $\det(\bI-\bV^{T}\bM^{-1}\bU)\not= 0$ and the Sherman-Morrison-Woodbury theorem ensures that $\bA$ is invertible \cite{Golub12}.
An example of significant interest is the case where $\bM$ is circulant as we mentioned before. In such a case we will call the matrix $\bA$ {\em nearly circulant matrix}. We know that circulant matrices are invertible and its inversion can be performed as follows: Suppose that we want to solve the system $\bM\bx=\bb$ where $\bM$ is circulant, then the diagonalization of $\bM$ is $\bM=\mathcal{F}\bD\mathcal{F}^{-1}$ where $\mathcal{F}$ is the so-called Fourier matrix and $\bD$ a diagonal matrix with entries the discrete Fourier transform of the first row (or column) of $\bM$. Therefore, the solution of the linear system can be expressed as $\bx=\mathcal{F}({\bD}^{-1}\mathcal{F}^{-1}\bb)$, which has a trivial implementation via the FFT, \cite{Chen1987}.

Some of the main advantages of the proposed iterative method are the following:
\begin{itemize}
    \item There is no requirement for symmetric, sparse or diagonaly dominant matrix
    \item It can become faster using sparse matrix-vector multiplication  or parallel computing
    \item In the case of nearly circulant matrices it can be implemented seamlessly via FFT and it can be very fast.
\end{itemize}

In Section \ref{sec:method} we study the new iteration and its convergence. Applications with various finite element and finite difference discretizations are presented in Section \ref{sec:appl1}. The numerical experiments were performed using our implementations in Python programming language, and thus the times reported in this paper are indicative. Our implementations can be found in \cite{gitM}.

\section{The new splitting}\label{sec:method}

Let $\bA$ be near $\bM$. Using standard notation of \cite{Varga} for iterative methods, we consider the splitting $\bA=\bM-\bN$ where $\bN=\bU\bV^T$. For this particular splitting, we define the iterative method as usual 
\begin{equation}\label{eq:basic}
\bx^{(k+1)}=\bM^{-1}(\bN\bx^{(k)}+\bb)\quad \text{for $k=0,1,\dots$}\ ,
\end{equation}
for $\bx^{(0)}$ a given initial guess of the solution. We will call this new method Sherman-Morrison-Woodbury (SMW) iterative method because of the particular splitting we used.

Before proving the convergence of the new iterative scheme, and for the sake of completeness, we present a generalization of the so-called {\em matrix determinant lemma} of \cite{Ding2,DZ2007} for more general rank-$r$ modifications:
\begin{lemma}\label{lem:lemma1}
If $\bM\in\mathbb{C}^{n,n}$ is invertible, and $\bU,\bV\in\mathbb{C}^{n,n}$ matrices, then
\begin{equation}
    \det(\bM-\bU\bV^T)=\det(\bI-\bV^T\bM^{-1}\bU)\det(\bM)\ .
\end{equation}
\end{lemma}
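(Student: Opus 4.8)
The plan is to reduce the claim to two facts: the multiplicativity of the determinant together with a factorization that pulls out $\bM$, and the commutation identity for determinants (Sylvester's, also called the Weinstein--Aronszajn identity). First I would use the invertibility of $\bM$ to write
\begin{equation}
\bM-\bU\bV^T=\bM\bigl(\bI-\bM^{-1}\bU\bV^T\bigr),
\end{equation}
so that multiplicativity gives $\det(\bM-\bU\bV^T)=\det(\bM)\det(\bI-\bM^{-1}\bU\bV^T)$. It then remains only to show that
\begin{equation}
\det\bigl(\bI-\bM^{-1}\bU\bV^T\bigr)=\det\bigl(\bI-\bV^T\bM^{-1}\bU\bigr),
\end{equation}
after which the two equalities combine to yield the statement.

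The second equality is an instance of the identity $\det(\bI-\bA\bB)=\det(\bI-\bB\bA)$, applied with $\bA=\bM^{-1}\bU$ and $\bB=\bV^T$; this is the heart of the argument and I expect it to be the main obstacle, since it is exactly the step that upgrades the rank-$1$ matrix determinant lemma of \cite{DZ2007} to the general rank-$r$ case. To prove it cleanly I would introduce the block matrix
\begin{equation}
\begin{pmatrix}\bI & -\bA\\ \bB & \bI\end{pmatrix}
\end{equation}
and compute its determinant in two ways by block Gaussian elimination. Eliminating the lower-left block using the top-left identity as pivot produces the Schur complement $\bI+\bB\bA$ in the bottom-right corner and gives the value $\det(\bI+\bB\bA)$; eliminating the upper-right block using the bottom-right identity as pivot produces the Schur complement $\bI+\bA\bB$ in the top-left corner and gives the value $\det(\bI+\bA\bB)$. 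Comparing the two triangular factorizations yields $\det(\bI+\bA\bB)=\det(\bI+\bB\bA)$, and replacing $\bA$ by $-\bA$ gives $\det(\bI-\bA\bB)=\det(\bI-\bB\bA)$, as required.

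Both block factorizations are elementary and valid with no hypotheses beyond those already assumed (in particular they require nothing about the invertibility of $\bI\pm\bA\bB$), so the argument is fully general and covers the rank-$r$ modification $\bN=\bU\bV^T$ without any symmetry or rank restriction. As a concluding remark I would note that this identity is precisely the algebraic fact underlying Definition \ref{def:nearcirc}: when $\rho(\bV^T\bM^{-1}\bU)<1$ the factor $\det(\bI-\bV^T\bM^{-1}\bU)$ is nonzero, so the lemma shows $\det(\bM-\bU\bV^T)\neq0$ and hence $\bA$ is invertible, recovering the Sherman--Morrison--Woodbury conclusion noted after the definition.
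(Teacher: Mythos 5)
Your proof is correct and rests on the same mechanism as the paper's: evaluating the determinant of a $2n\times 2n$ block matrix by two different Schur-complement (block triangular) factorizations. The only difference is organizational --- you first pull out $\det(\bM)$ and then establish the Weinstein--Aronszajn identity $\det(\bI-\bA\bB)=\det(\bI-\bB\bA)$ via the block matrix $\bigl(\begin{smallmatrix}\bI & -\bA\\ \bB & \bI\end{smallmatrix}\bigr)$, whereas the paper applies a single $LDU$ factorization directly to $\bigl(\begin{smallmatrix}\bM & \bU\\ \bV^T & \bI\end{smallmatrix}\bigr)$ --- so the two arguments are equally general and neither requires hypotheses beyond the invertibility of $\bM$.
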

\begin{proof}
The proof follows from the identity
$$
\begin{pmatrix}
\bM & \bU\\
\bV^T &\bI
\end{pmatrix} =
\begin{pmatrix}
\bI & \bo\\
\bV^T\bM^{-1} & \bI
\end{pmatrix}
\begin{pmatrix}
\bM & \bo\\
\bo & \bI-\bV^T\bM^{-1}\bU
\end{pmatrix}
\begin{pmatrix}
\bI & \bM^{-1}\bU\\
\bo  & \bI
\end{pmatrix}\ .
$$
The determinants of the first and last matrix on the right-hand side are equal to 1, while the determinant in the middle of this formula can be expressed as the desired product.
\end{proof}

Using the previous {\em matrix determinant lemma} we can prove the convergence of the new iterative method:
\begin{theorem}\label{thm:main1}
If $\bA$ is nearly $\bM$, then the SMW iteration converges.
\end{theorem}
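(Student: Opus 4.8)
The plan is to reduce the statement to the classical convergence criterion for stationary iterations: the scheme \eqref{eq:basic} converges for every initial guess $\bx^{(0)}$ and every right-hand side $\bb$ if and only if its iteration matrix $\bG=\bM^{-1}\bN=\bM^{-1}\bU\bV^T$ satisfies $\rho(\bG)<1$ (see \cite{Varga}). So the entire task is to bound the spectral radius of $\bG$, and the hypothesis that $\bA$ is nearly $\bM$ supplies exactly one fact to exploit, namely $\rho(\bV^T\bM^{-1}\bU)<1$. The natural strategy is therefore to show that the eigenvalues of $\bG$ are controlled by those of the (typically much smaller) matrix $\bV^T\bM^{-1}\bU$, and Lemma \ref{lem:lemma1} is precisely the tool that links the two.

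To carry this out I would write the characteristic equation of $\bG$ and clear the inverse of $\bM$. Since
$$
\det(\lambda\bI-\bG)=\det\!\big(\bM^{-1}\big)\,\det(\lambda\bM-\bU\bV^T),
$$
and $\det(\bM^{-1})\neq 0$, the eigenvalues of $\bG$ are exactly the roots of $\det(\lambda\bM-\bU\bV^T)=0$. For any $\lambda\neq 0$ the matrix $\lambda\bM$ is invertible, so Lemma \ref{lem:lemma1} applies with $\lambda\bM$ playing the role of $\bM$ and gives
$$
\det(\lambda\bM-\bU\bV^T)=\det\!\Big(\bI-\tfrac{1}{\lambda}\bV^T\bM^{-1}\bU\Big)\,\lambda^n\det(\bM).
$$
Because $\lambda^n\det(\bM)\neq 0$, a nonzero $\lambda$ is an eigenvalue of $\bG$ precisely when $\det(\bI-\tfrac{1}{\lambda}\bV^T\bM^{-1}\bU)=0$, that is, precisely when $\lambda$ is an eigenvalue of $\bV^T\bM^{-1}\bU$.

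The conclusion then follows immediately: every nonzero eigenvalue of $\bG$ is an eigenvalue of $\bV^T\bM^{-1}\bU$, so its modulus is at most $\rho(\bV^T\bM^{-1}\bU)<1$, while the eigenvalue $\lambda=0$ (which occurs whenever $\bU\bV^T$ is rank-deficient) trivially has modulus less than $1$. Hence $\rho(\bG)<1$ and the iteration converges. The one point that genuinely needs care, and the only real obstacle, is that Lemma \ref{lem:lemma1} demands an invertible matrix and so cannot be applied at $\lambda=0$; this is exactly why the zero eigenvalue must be handled separately. A secondary point worth stating cleanly is that the argument only establishes that the nonzero spectrum of $\bG$ is contained in the spectrum of $\bV^T\bM^{-1}\bU$, which is all that the spectral-radius bound requires, so no claim about equality of the full spectra or of algebraic multiplicities is needed.
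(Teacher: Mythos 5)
Your proposal is correct and follows essentially the same route as the paper: write the characteristic equation of $\bG=\bM^{-1}\bU\bV^T$, factor out $\lambda\bM$, and invoke Lemma \ref{lem:lemma1} to transfer the nonzero spectrum of $\bG$ to that of $\bV^T\bM^{-1}\bU$. If anything, your version is slightly more careful than the paper's, since you explicitly exclude and then separately dispose of the eigenvalue $\lambda=0$, a case the paper's division by $\lambda$ silently assumes away.
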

\begin{proof}
In order to prove that the SMW iteration converges we will show that the spectral radius of the iteration matrix $\bG=\bM^{-1}\bN$ is less than 1, i.e. $\rho(\bM^{-1}\bN)<1$. If $\lambda$ is an eigenvalue of the iteration matrix $\bG$, then
$$\begin{aligned}
0=\det(\bM^{-1}\bN-\lambda \bI)& = \det(\bM^{-1}\bU\bV^T-\lambda \bI)=\det(-\lambda\bM^{-1})\det(\bM-\frac{1}{\lambda}\bU\bV^T)\\
\text{(By Lemma \ref{lem:lemma1})} &=\det(-\lambda \bM^{-1})\det(\bI-\frac{1}{\lambda}\bV^T\bM^{-1}\bU)\det(\bM)=\det(\bV^T\bM^{-1}\bU-\lambda \bI)\ .
\end{aligned}
$$
Thus, the eigenvalues of $\bM^{-1}\bN$ coincide with the eigenvalues of $\bV^T\bM^{-1}\bU$. By the Definition \ref{def:nearcirc} these eigenvalues are  $|\lambda|<1$. Thus, the new iterative method converges.
\end{proof}

\begin{remark}
One can define a matrix $\bA$ to be nearly $\bM$ as the perturbation $\bA=\bM-\bu\bv^T$, where $\bu,\bv\in\mathbb{R}^n$ with $|\bv^T\bM^{-1}\bu|<1$. In such a case the previous theorem holds true again and we can obtain the eigenvalue of the iteration matrix $\bG$ to be $\lambda=\bv^T\bM^{-1}\bu$.
\end{remark}

In the previous analysis $\bM$ does not need to be a circulant matrix but any matrix that can be inverted easily. On the other hand the inversion of a circulant matrix can be implemented trivially using the Fast Fourier Transform. Specifically, if $\bM$ is circulant, then the iterative method (\ref{eq:basic}) can be written as
\begin{equation}\label{eq:newmeth} 
\bx^{(k+1)}=\mathcal{F}\bD^{-1}\mathcal{F}^{-1}(\bN\bx^{(k)}+\bb)\ ,
\end{equation}
where
$\bM=\mathcal{F}\bD\mathcal{F}^{-1}$. Recall that the diagonal matrix $\bD$ is the FFT of the first row of $\bM$.
The computational cost of the inversion of $\bM$ is $O(n\log(n))$ while the matrix-vector multiplications on the right-hand side of (\ref{eq:newmeth}) can be performed using sparse matrix multiplication algorithms. The minimal storage requirements of circulant matrices and the almost linear complexity of the FFT for very large values of $n$, makes the iteration (\ref{eq:newmeth}) very useful in demanding situations.

We can improve the speed of the iteration (\ref{eq:newmeth}) by introducing an extrapolated parameter $\omega\in\mathbb{C}\setminus\{0\}$ and writing the extrapolated SMW (eSMW) iteration in the form
\begin{equation}\label{eq:relax}
\bx^{(k+1)}=[(1-\omega)\bI+\omega \bM^{-1}\bN]\bx^{(k)}+\omega\bM^{-1}\bb, \quad k=0,1,2\dots\ .
\end{equation}

It is known that the extrapolated iteration converges for $\max_{\lambda_i\in\sigma(\bG)}|1+\omega(\lambda_i-1)|<1$, \cite{Hadjidimos,Hadjidimos2}. The estimation of such parameter $\omega$ requires the knowledge of the eigenvalues $\lambda_i$ of $\bG=\bM^{-1}\bN$. To simplify the calculations we will search for a real optimal parameter $\omega_{\rm opt}$ that maximizes the speed of convergence, and also we assume that the eigenvalues $\lambda_i$ are real with $\lambda_{\min}$ and $\lambda_{\max}$ the minimum and maximum eigenvalues respectively. Since $\max_i|\lambda_i|<1$, the convergence of the extrapolated iteration requires that the eigenvalues $1+\omega(\lambda_i-1)$ of the iteration matrix $\bG_\omega=[(1-\omega)\bI+\omega \bG]$ are less than $1$. This implies that
$$0<\omega<\frac{2}{1-\lambda_{\min}}\ .$$
Following \cite{Saad2003} we compute the optimal value $\omega_{\text{opt}}$ to be
\begin{equation}\label{eq:omopt}
\omega_{\text{opt}}=\frac{2}{2-(\lambda_{\min}+\lambda_{\max})}\ .
\end{equation}
This value is the optimal real value $\omega$ that satisfies the minimization problem
$$\min_{\omega\in (0,2/(1-\lambda_{\min}))}\max_{\lambda\in \rho(G)}|1+\omega(\lambda-1)|\ .$$ 
Recognizing the difficulty of finding the optimal relaxation parameter, in most of the experiments we estimated the parameter using experimentation, while in some case we estimated the eigenvalues numerically, and then computed the optimal values $\omega_{\text{opt}}$.

\section{Applications}\label{sec:appl1}

Although we let the definition of a nearly $\bM$ matrices to be very generic, we focus here on linear systems that occur in finite element and finite difference discretizations of differential equations with nearly circulant matrices. In all cases we can write $\bN=\bU^T\bV$ and we can verify numerically that the assumptions on matrices $\bU$ and $\bV$ are satisfied. In some case we present an indicative comparison of the  performance of the new iteration with the classical Gauss-Seidel and GMRES methods, \cite{SM1986}. In all the experiments we considered as initial guess of the solution the zero vector. 

\subsection{Applications to finite element methods}\label{sec:appl1a}

First, we demonstrate the applicability of the previous iterations  by solving the classical boundary value problem 
$$
 -u''(x)+u(x)=f(x) \quad \text{for $x\in (0,1)$} \quad \text{with $u(0)=u(1)=0$}\ ,
$$
using finite element methods with spline elements. In particular we consider a uniform partition $0=x_0<x_1<\cdots<x_n=1$ of $[0,1]$ with stepsize $h=x_{i+1}-x_i$ and the space
$$S_h^1=\{\phi\in C([0,1])~:~ \phi(0)=\phi(1)=0 ~ \text{and} ~ \phi\in P^1([x_i,x_{i+1}])~\text{for}~ i=0,1,\dots,n-1\}\ ,$$
where $P^r$ denotes the space of polynomials of degree $r$.
Using the standard basis splines $\phi_i$ of continuous piecewise linear elements \cite{Mitsotakis2023}, the discrete problem is based on the computation of the $H^1$-projection $u_h\in S_h^1$ such that
$$\int_0^1 u_h'\phi'+u_h\phi~dx=\int_0^1f\phi\quad\text{for all}~ \phi\in S_h^1\ .$$
Expressing $u_h=\sum_i c_i\phi_i(x)$ and taking $\phi=\phi_j$, the discrete problem becomes equivalent to a banded linear system with matrix $$ \bA=\mathcal{K}+\mathcal{M}={\tiny\frac{1}{h}  \begin{pmatrix}
1 & -1  &  &  &    \\
-1 & 2 & -1 &  &   \\
 & \ddots & \ddots & \ddots &   \\
 &  & -1 & 2 & -1   \\
 &  &  &   -1 & 1 
\end{pmatrix}+
\frac{h}{6}
\begin{pmatrix}
2 & 1  &  &  &    \\
1 & 4 & 1 &  &   \\
 & \ddots & \ddots & \ddots &   \\
 &  & 1 & 4 & 1   \\
 &  &  &  1 & 2 
\end{pmatrix}}\ ,  $$
where $\mathcal{K}$ is the stiffness matrix and $\mathcal{M}$ the mass matrix.
For the sake of simplicity here we consider $h=1$ for any value of $n$, and we take appropriate right-hand side $\bb$ that leads to the exact solution $\bx=(1,1,\dots, 1)^T$. Specifically, we considered the circulant matrix defined by the vector
$$\bM=\text{circulant}(-5/6,8/3,-5/6)\ ,$$
and $\bN=\bM-\bA$, and we solve the corresponding linear system using the iterations (\ref{eq:newmeth}) and (\ref{eq:relax}).
Table \ref{tab:comp1} presents a comparison between the classical iterative method Gauss-Seidel, the GMRES and the new one. We observe that the new method requires the least iterations to converge while it is very fast. Taking advantage of parallel features of the FFT one can achieve better results.
\begin{table}[ht!]
    \centering
    \begin{tabular}{l|lcccc}
   & method & Gauss-Seidel & GMRES & SMW  (\ref{eq:newmeth}) &  eSMW($\omega=1.20$) (\ref{eq:relax})  \\ \hline
 $n=10^3$  & time in sec (iterations) & 0.462(25) & 0.017(21) & 0.007(18) & 0.006(13) \\
 $n=10^4$   & time in sec (iterations) & 3.472(25) & 0.026(20) & 0.010(18) &  0.008(13) \\
 $n=3\times 10^4$  & time in sec (iterations) & 19.43(25) & 0.092(19) & 0.038(18) & 0.029(13) \\ 
  \hline
    \end{tabular}
    \caption{Number of iterations and CPU time required for the linear finite elements matrix}
    \label{tab:comp1}
\end{table}

Note that we report only the time used for the execution of the main loop of the method. Our implementations took into account the sparsity of the matrices $\bA$, $\bM$ and $\bN$ and we used sparse matrix vector multiplications \cite{Mitsotakis2023}. It is  easy to check (numerically) that the condition of the Definition \ref{def:nearcirc} is satisfied by the matrices $\bM$ and $\bN$ that is derived using the $n\times n$ matrices $\bU$ and $\bV$ with $\bU_{1,1}=\bU_{n,n}=2$, $\bU_{1,n}=\bU_{n,1}=1$, $\bV_{1,1}=\bV_{n,n}=7/6$ $\bV_{1,n}=\bV_{n,1}=-1$, and all the other entries are zero.

In a similar manner we study a more demanding situation. We consider the finite element space of smooth cubic splines
$$S_h^3=\{\phi\in C^2[0,1]~ : ~ \phi(0)=\phi(1)=0 ~ \text{and} ~ \phi\in P^{3}([x_i,x_{i+1}]~\text{for}~ i=0,1,\dots,n-1\}\ ,$$
and we compute the corresponding mass matrix for the computation of the $L^2$-projection 
$$\int_0^1 u_h\phi~dx=\int_0^1 f\phi~dx\quad\text{for all}~ \phi\in S_h^3\ .$$
The particular mass matrix is hepta-diagonal
$${\tiny \bA=h\begin{pmatrix}
    31/140 & 773/2240 & 29/560 & 1/2240 &  &  &  &  &  & \\
    773/2240 & 41/40 & 17/32 & 3/56 & 1/2240 &  &  &  & & \\
    29/560 & 17/32 & 151/140 & 1191/2240 & 3/56 & 1/2240 & &  &  & \\
    1/2240 & 3/56 & 1191/2240 & 151/140 & 1191/2240 & 3/56 & 1/2240 &  & & \\
    & \ddots & \ddots & \ddots & \ddots & \ddots & \ddots & \ddots & \\
  &  & &   1/2240 & 3/56 & 1191/2240 & 151/140 & 1191/2240 & 3/56 & 1/2240  \\
    & &  &  &  1/2240 & 3/56  & 1191/2240 & 151/140 & 17/32 & 29/560 \\
   &  &  &  & &  1/2240 & 3/56 & 17/32 & 41/40 & 773/2240 \\
 &  &  &  &  &  &   1/2240  & 29/560 & 773/2240 & 31/140  \\
\end{pmatrix}}\ ,$$
and is not a diagonally dominant matrix. Again, for simplicity we take $h=1$ for all values of $n$, and we choose the right hand side appropriately so as the exact solution is again the vector $\bx=(1,1,\dots,1)^T$. For the new method we consider the circulant matrix $\bM$ which is defined as 
$$\bM=\text{circulant}(
1/2240, 3/56, 1191/2240, 151/140, 1191/2240, 3/56, 1/2240)\ ,
$$ and $\bN=\bM-\bA$. The results of the solution of this linear system using the new SMW splittings are summarized in Table \ref{tab:comp2}.  In this case, the Jacobi method did not converge at all. Luckily, the Gauss-Seidel method converged to the exact solution. In this experiment the new iteration had pretty much the same performance as our GMRES implementation. 

\begin{table}[ht!]
    \centering
    \begin{tabular}{l|lcccc}
   & method & Gauss-Seidel & GMRES & SMW  (\ref{eq:newmeth}) &  eSMW($\omega=1.86$) (\ref{eq:relax})  \\ \hline
  $n=10^3$ & time in sec (iterations) & 0.846(72) & 0.024(47) & 0.091(251) & 0.061(137) \\ 
 $n=10^4$ & time in sec (iterations) &  10.40(72) & 0.053(44) & 0.136(251) & 0.080(137)\\
 $n=3\times 10^4$ & time in sec (iterations) & 56.64(72) & 0.282(43) & 0.425(251) & 0.280(137)  \\ 
  \hline
    \end{tabular}
    \caption{Number of iterations and CPU time required for the cubic spline finite elements matrix}
    \label{tab:comp2}
\end{table}

As we mentioned before, the new methodology can be applied to more general matrices that are not necessarily symmetric or diagonally dominant. To demonstrate this we consider the matrix
$$\bA =  \begin{pmatrix}
5 & 3 & 2 & 2\\
1 & 4 & 3 & 2\\
2 & 1 & 4 & 3\\
4 & 2 & 1 & 5
\end{pmatrix}\ ,$$
which is nearly circulant with $\bN$ the matrix with $-1$ on its four corners and zeros elsewhere. The SMW iteration (\ref{eq:newmeth}) converges in 20 iterations. The extrapolated SMW iteration (\ref{eq:relax}) with $\omega_\text{opt}\approx 0.851$ converges in only 9 iterations, while the Jacobi method does not converge, the Gauss-Seidel method luckily requires 78 iterations and the GMRES required only 3 iterations. The parameter $\omega_\text{opt}$ was computed using the formula (\ref{eq:omopt}).

\begin{remark}
Note that the cases we present in this paper can be handled by other numerical methods and are commonly encountered in finite element and finite difference implementations. However, there are cases where the new method outperforms other methods. For example, consider a dense $n\times n$  circulant matrix $\bM$ (for large $n$), and as $\bN$ any sparse matrix that satisfies the requirement stated in Theorem \ref{thm:main1}. We tested such a case with $\bM$ constructed by a random vector with entries following the uniform distribution over $[1,2)$. In such a case, the method presented here is the only trivial approach to handle it even for $n=10^6$. This particular example can be found in \cite{gitM} where the resulting matrix $\bA$ is ill-conditioned but still the new method converges almost instantaneously within a few iterations. On the other hand, all the other numerical methods we tested (including the GMRES implementation of the Python module \texttt{scipy.sparse.linalg}) failed to converge even when $n=100$. The particular experiment can be found in \cite{gitM}.
\end{remark}

\begin{remark}
We underline that the performance of all the methods we used can be improved using various techniques (parallelization,  preconditioning, etc.) and even with better implementations. For this reason, these experiments serve as indication of the performance of the new iteration. On the other hand, the simplicity of the new method and its overall good performance.
\end{remark}

\subsection{Extensions to saddle point problems}\label{sec:appl1b}

In this section we consider a linear system that can be derived when we use a mixed formulation of a  finite element method for some nonlinear and dispersive wave equations with zero Dirichlet boundary conditions \cite{MRKS2021}. Consider the linear system 
$$
\begin{pmatrix}
\bA_1 & \bB_1\\
\bB_2 & \bA_2
\end{pmatrix}
\begin{pmatrix} 
\bx\\
\by
\end{pmatrix}
= \begin{pmatrix}
\bb_1\\
\bb_2
\end{pmatrix}\ ,
$$
where $\bA_1$, $\bA_2$ are square $n\times n$ and $m\times m$, and $\bB_1$ and $\bB_2$ are $n\times m$ and $m\times n$ matrices respectively. The unknown vectors $\bx$ and $\by$ are accordingly $n$ and $m$-dimensional vectors. In what follows we assume for simplicity that $A_i$ and $B_i$ are invertible $n\times n$ matrices.

Taking into account the special structure of the matrices $\bA_1$ and $\bA_2$, which are assumed to be near  $\bM_1$ and $\bM_2$, respectively, we propose the following iterative method:

Write $\bA_1 = \bM_1-\bN_1$ and $\bA_2=\bM_2-\bN_2$, then we define the block Jacobi-SMW iteration
\begin{equation}\label{eq:bjac}
\begin{pmatrix}
\bM_1 & \\
 & \bM_2
\end{pmatrix}
\begin{pmatrix} 
\bx^{(k+1)}\\
\by^{(k+1)}
\end{pmatrix}=
\begin{pmatrix}
\bN_1 & -\bB_1 \\
 -\bB_2 & \bN_2 
\end{pmatrix}
\begin{pmatrix} 
\bx^{(k)}\\
\by^{(k)}
\end{pmatrix}+\begin{pmatrix}
\bb_1\\
\bb_2
\end{pmatrix}\quad \text{for $k=0,1,\dots$}\ ,
\end{equation}
and $(\bx^{(0)},\by^{(0)})^T$ is a given initial guess of the solution. If $\bM_1=\mathcal{F}_1\bD_1\mathcal{F}_1^{-1}$ and $\bM_2=\mathcal{F}_2\bD_2\mathcal{F}_2^{-1}$ are circulant matrices with $\mathcal{F}_i$ the corresponding Fourier matrices, then the inversion of the block-diagonal matrix 
$$\begin{pmatrix}
\bM_1 & \\
& \bM_2
\end{pmatrix}
=
\begin{pmatrix}
\mathcal{F}_1 & \\
&\mathcal{F}_2
\end{pmatrix}
\begin{pmatrix}
\bD_1 & \\
& \bD_2
\end{pmatrix}
\begin{pmatrix}
\mathcal{F}_1^{-1} & \\
& \mathcal{F}_2^{-1}
\end{pmatrix}\ ,$$
is trivial using FFT and in particular
the diagonal of $\bD_i$ is actually the FFT of the vector comprised the first row of $\bM_i$. Denoting the vectors $\bu^{(k)}=(\bx^{(k)},\by^{(k)})^T$ and $\bb=(\bb_1,\bb_2)^T$, and the matrices
$$\bD=\begin{pmatrix}
  \bD_1 & \\
   & \bD_2
\end{pmatrix}, \quad \bF=\begin{pmatrix}
    \mathcal{F}_1 & \\
    & \mathcal{F}_2
\end{pmatrix}, \quad \bF^{-1}=\begin{pmatrix}
    \mathcal{F}_1^{-1} & \\
    & \mathcal{F}_2^{-1}
\end{pmatrix}\quad \text{and} \quad \bN=\begin{pmatrix}
\bN_1 & -\bB_1 \\
 -\bB_2 & \bN_2 
\end{pmatrix}\ ,$$
we express the iterative method in the form
\begin{equation}
\bu^{(k+1)}=\bF \bD^{-1} \bF^{-1}(\bN\bu^{(k)}+\bb)\ .
\end{equation}
where $\bD$ is actually the FFT of the vector comprised the first row of $\bM_1$ followed by the first row of $\bM_2$.

Another extension is the block-Gauss-Seidel-SMW variant of the new iterative method, which can be formulated as
\begin{equation}\label{eq:bgs}
\begin{pmatrix}
\bM_1 & \\
 \bB_2 & \bM_2
\end{pmatrix}
\begin{pmatrix} 
\bx^{(k+1)}\\
\by^{(k+1)}
\end{pmatrix}=
\begin{pmatrix}
\bN_1 & -\bB_1 \\
  & \bN_2 
\end{pmatrix}
\begin{pmatrix} 
\bx^{(k)}\\
\by^{(k)}
\end{pmatrix}+\begin{pmatrix}
\bb_1\\
\bb_2
\end{pmatrix}\quad \text{for $k=0,1,\dots$}\ ,
\end{equation}
and with given $(\bx^{(0)},\by^{(0)})$.
This is expected to converge in less iterations than the Jacobi method since it utilises the updates $\bx^{(k+1)}$ in the computation of the $\by^{(k+1)}$. In the special case where the matrices $\bM_1$ and $\bM_2$ are nearly circulant, then the implementation is straightforward again. If the matrices $\bB_i$ are also circulant, then the method can be accelerated again using the FFT of $\bx^{(k)}$ in all the intermediate stages. In our implementation we assume that $\bB_1$ and $\bB_2$ are sparse and we use sparse matrix-vector multiplication as it is implemented in Python in CSR storage format, cf. e.g. \cite{Mitsotakis2023}. 

As an example we consider the case where $\bA_1=\bA_2=\bA$ is the mass matrix $\mathcal{M}$  with entries $\bA_{ij}=\int_0^1 \phi_i\phi_j~dx$ and $\bB_{ij}=\int_0^1 \phi_i\phi_j'~dx$, where $\phi_i$ are the standard hat basis functions of the space of continuous and piecewise-linear functions $S_h^1$. The matrix $\bB$ is tri-diagonal $\bB=\text{tridiag}(-1/2,0,1/2)$ but not circulant. Such matrices occur in mixed formulations of finite element methods like those in \cite{MRKS2021} and \cite{GH2006}. In this particular case the method converges very fast. The performance of the new method in comparison with the sparse Gauss-Seidel method and GMRES is presented in Table \ref{tab:comp3}. 

\begin{table}[ht!]
    \centering
    \begin{tabular}{l|lcccc}
   & method & Gauss-Seidel & GMRES & SMW (\ref{eq:bjac}) & SMW (\ref{eq:bgs})  \\ \hline
  $n=10^3$ & time in sec (iterations) & 0.527(25) & 0.021(22) & 0.040(39) & 0.025(37) \\ 
 $n=10^4$ & time in sec (iterations) & 10.11(25) & 0.035(21) & 0.048(39) & 0.042(37) \\
 $n=3\times 10^4$ & time in sec (iterations) & 64.62(25) & 0.266(21) & 0.152(39) & 0.153(37)   \\ 
  \hline
    \end{tabular}
    \caption{Number of iterations and CPU time for linear elements in a mixed formulation}
    \label{tab:comp3}
\end{table}

The methodology presented in this section can be extended to larger block matrices obtained in saddle point problems and also to two-dimensional problems with tensor products of splines in a very similar manner. A particular, classical example is presented in the next section. 

\subsection{An application to two-dimensional Poisson equation}\label{sec:appl1c}

We consider the classical five-point finite difference method for the numerical solution of the Poisson equation, \cite{Smith}, 
\begin{equation}
\begin{aligned}
& \Delta u = f, \quad (x,y)\in \Omega=[0,1]\times[0,1],\\
& u(x,y)=g,\quad (x,y)\in\partial \Omega.
\end{aligned}
\end{equation}
For simplicity we consider the uniform grid with $\Delta x=\Delta y=1/m$.  Applying the five-point stencil finite difference scheme we obtsain to a block-tridiagonal system of equations $\bA\bx=\bb$ where
$\bA$ is the $m^2\times m^2$ matrix $$
\bA=\begin{pmatrix}
\bD & \bI &   &   &\\
\bI & \bD & \bI &   & \\
  & \ddots & \ddots & \ddots & \\
  &   & \bI & \bD & \bI\\
  &   &   & \bI & \bD
\end{pmatrix}\ ,
$$
$\bD=\text{tridiag}(1,-4,1)$ is $m\times m$ matrix, and $\bI$ is the $m \times m$ identity matrix. In order to devise the block-Gauss-Seidel-SMW iteration for this particular case we write $\bD=\bM-\bN$ where $\bM$ is the circulant matrix $\bM = \text{circulant}(1,-4,1)$ and $\bN$ is such that $\bN_{i,j}=0$ for all $i,j$ except for $\bN_{1,m}=\bN_{m,1}=1$. Moreover, we write the $k$-th iteration and the right hand side as a block-column vectors:
$$\bx^{(k)}=(\bx_1^{(k)},\bx_2^{(k)}\dots,\bx_m^{(k)})^T\quad \text{and}\quad \bb=(\bb_1,\bb_2,\dots,\bb_m)^T\ .$$ 
Then the algorithm is as follows:

\begin{algorithm}
\begin{algorithmic}
\State Set the initial guess $\bx_i^{(0)}$ for $i=1,2,\dots, m$
\State Set the iteration $k=0$, the tolerance $TOL$ and the maximum number of iterations $MAXIT$
\State Initialize $Error=TOL+1$
\While {$Error>TOL$ and $k<MAXIT$}
\State Solve $\bM\bx_1^{(k+1)}=\bN \bx_1^{(k)}-\bx_2^{(k)}+\bb_1$
\For {$i=2,3,\dots, m-1$}
\State Solve $\bM\bx_i^{(k+1)}=\bN\bx_i^{(k)}-\bx_{i-1}^{(k+1)}-\bx_{i+1}^{(k)}+\bb_i$
\EndFor
\State Solve $\bM x_m^{(k+1)}=\bN \bx_m^{(k)}-\bx_{m-1}^{(k+1)}+\bb_m $
\State Set $Error=\|\bx^{(k)}-\bx^{(k+1)}\|$ and $k=k+1$
\EndWhile
\State Return the approximation $\bx^{(k)}$
\end{algorithmic}\caption{Block-Gauss-Seidel-SMW iteration for the Poisson equation}\label{alg:bisection}
\end{algorithm}

To test this algorithm we took $m=200$ (i.e. $\bA\in\mathbb{R}^{20000\times 20000}$). For simplicity, we took again $\bb$ to be the vector that results in the solution $\bx$ with $x_i=1$ for all $i$. The method with $TOL=10^{-7}$ converged within 503 iterations and it required $2.98$ seconds approximately. Here it is pointless to provide any comparison with the other classical iterative methods but we report that the python implementation of GMRES converged in 5211 iterations and it required more than 30 seconds and the actual difference between the exact solution and the numerical was of the order $10^{-5}$ even if the residual was of the order $10^{-9}$. It is obvious by the requirements of this experiment that the new method can be used in large-scale problems due to its simplicity, minimum storage requirements as well as its performance. 

The implementation of this algorithm along with implementations of the previous variants of the SMW iteration and the setup of all the examples presented here can be found in \cite{gitM}. All the experiments performed on an Apple computer with processor M1 of 2020 and 16GB RAM.

\section{Conclusions}

A new splitting based on the Sherman-Morrison-Woodbury formula was presented for the iterative solution of linear systems of certain form. After introducing the notion of nearly $\bM$ matrices, we proved that for such matrices the new method converges. The new method can be used with finite element and finite difference formulations to solve banded, non-symmetric, block systems (saddle point problems) seamlessly. The applications should not be limited to those we presented here. Among the several advantages of the new method, one is its simple implementation in case of nearly circulant matrices. The speed of the new method can be increased using classical extrapolation techniques and can compete standard iterative methods. The performance of the new iteration is increasing with the dimension of the matrix and becomes faster even than the classical GMRES for large matrices.

\section*{Acknowledgments}

The author would like to express his gratitude to Prof. A. Hadjidimos for the fruitful discussions on the subject.

\bibliographystyle{plain}

\end{document}